\documentclass[12pt,a4paper,leqno]{amsart}

\setlength{\textwidth}{6.0in}
\setlength{\textheight}{8.6in}
\setlength{\oddsidemargin}{4mm}
\setlength{\evensidemargin}{4mm}
\setlength{\footskip}{12mm}

\usepackage{amsfonts,amssymb,mathrsfs,bm,enumerate,color,graphicx}
\usepackage[pdftex]{hyperref}
\usepackage[numbers]{natbib}

\numberwithin{equation}{section}
\pagestyle{plain}
\theoremstyle{plain}
 \newtheorem{thm}{Theorem}[section]
 \newtheorem{lem}[thm]{Lemma}
 \newtheorem{cor}[thm]{Corollary}
 \newtheorem{prop}[thm]{Proposition}
 
\theoremstyle{definition}
 
 \newtheorem{ex}[thm]{Example}
 \newtheorem{rem}[thm]{Remark}

\newcommand{\al}{\alpha}

\newcommand{\gm}{\gamma}

\newcommand{\dl}{\delta}

\newcommand{\ep}{\varepsilon}

\newcommand{\ld}{\lambda}

\newcommand{\ta}{\tau}

\newcommand{\eqd}{\overset{\mathrm d}{=}}
\newcommand{\sek}{\int_0^{\infty}}
\newcommand{\xtm}{X_t^{(\mu)}}
\newcommand{\wh}{\widehat}
\newcommand{\wt}{\widetilde}
\newcommand{\la}{\langle}
\newcommand{\ra}{\rangle}
\newcommand{\B}{\mathcal{B}}

\newcommand{\R}{\mathbb{R}}

\newcommand{\rd}{{\mathbb R^d}}
\newcommand{\law}{\mathcal L}

\newcommand{\n}{\noindent}

\def\1{\scalebox{0.94}{$1$}\hspace{-0.36em}1}

\allowdisplaybreaks[4]
\begin{document}
\setlength{\baselineskip}{18pt}
\setlength{\parindent}{1.8pc}

\title{Stochastic integral and series representations for strictly stable distributions}

\author[M.~Maejima]{Makoto Maejima$^{1,2}$}
\author[J.~Rosi\'nski]{Jan Rosi\'nski$^{3,4}$}
\author[Y.~Ueda]{Yohei Ueda$^{5,6}$}

\subjclass[2000]{60E07}
\keywords{strictly stable distribution, stochastic integral representation, series representation}
\date{\today}

\begin{abstract}
In this paper we find and develop a stochastic integral representation for the class of strictly stable distributions. We establish an explicit relationship between stochastic integral and shot-noise  series representations of strictly stable distributions, which shows that the class of distributions representable by stochastic integral is larger than the class  representable by a shot-noise  series. This inclusion is proper when the stability index $\al$ is greater than 1.  We also give an explicit description of distributions possessing both representations. 
\end{abstract}

\maketitle

\footnotetext[1]{Department of Mathematics, Keio University, 3-14-1, Hiyoshi, 
Kohoku-ku, Yokohama 223-8522, Japan. \email{maejima@math.keio.ac.jp}}
\footnotetext[2]{Maejima's research was partially supported by JSPS Grant-in-Aid for Science Research 22340021.}
\footnotetext[3]{Department of Mathematics, 227 Ayres Hall, University of Tennessee, Knoxville, TN, USA. \email{rosinski@math.utk.edu}}
\footnotetext[4]{Rosi\'nski's research was partially supported by the Simons Foundation grant 281440.}
\footnotetext[5]{Department of Mathematics, Keio University, 3-14-1, Hiyoshi, 
Kohoku-ku, Yokohama 223-8522, Japan. \email{ueda@math.keio.ac.jp}}
\footnotetext[6]{Corresponding author.}

\section{Introduction}

Throughout the paper, $I(\rd)$ denotes the class of all infinitely divisible distributions on $\rd$,
$\law(X)$ stands for the distribution of a random variable $X$, and the point-mass 
distribution at $a\in\rd$ is denoted by $\dl_a$. Given $\mu\in I(\rd)$, $\{X_t^{(\mu)}\}$ will denote a L\'evy process such that $\law(X_1)=\mu$.
For a fixed nonrandom function $f$, consider the stochastic integral mapping $\Phi_f\colon \mathfrak D(\Phi_f) \to I(\rd)$ given by
\begin{equation}\label{si}
\Phi_f(\mu)=\law\left(\int_0^\infty f(t)dX_t^{(\mu)}\right),
\end{equation}
where the domain $\mathfrak D(\Phi_f)$ consists of all $\mu \in I(\rd)$ for which the stochastic integral in (\ref{si}) is definable (see \cite{Sato2007}).

Stochastic integral mappings give probabilistic representations for many useful classes of distributions contained in $I(\rd)$. Namely, such classes can be represented as ranges of the mappings $\Phi_f$ for some specific functions $f$.  
Examples of classes of distributions having stochastic integral representations include the class of selfdecomposable distributions \cite{JurekVervaat1983, SatoYamazato1984, Wolfe1982},
the Jurek class \cite{Jurek1985},
the Goldie-Steutel-Bondesson class and the Thorin class \cite{Barndorff-NielsenMaejimaSato2006},
the class of type $G$ distributions \cite{AoyamaMaejima2007},
and many other classes \cite{AoyamaLindnerMaejima2009, AoyamaMaejimaRosinski2008, MaejimaUeda2009, Sato2010}. 
Characterizations of the domains and ranges of the corresponding stochastic integral mappings allow to determine the extent of such representations. They  are also necessary for the study of 
iterations of stochastic integral mappings and their limits \cite{MaejimaSato2009, MaejimaUeda2009, 
MaejimaUeda2009e, MaejimaUeda2009c, Rocha-ArteagaSato2003, 
Sato2010, Sato2011, Sato2012, SatoUeda2012}, compositions of several mappings 
\cite{Barndorff-NielsenMaejimaSato2006, Sato2006, MaejimaUeda2009e, Sato2010}, and related considerations \cite{AoyamaMaejimaUeda2011, IchifujiMaejimaUeda2010}.

Surprisingly for the present authors, the form of a stochastic integral representation of the class of stable distributions has been unknown. On the other hand, a shot noise-type series representation of such distributions has been known and used by many authors for a long time (see, e.g., \cite{S-T_book1994}).  In this paper, we give a stochastic integral representation of strictly stable distributions on $\rd$, characterize the domain and range of the corresponding stochastic integral mapping, and establish an explicit relationship between stochastic integral and series representations. Our main results are Theorems \ref{main}, \ref{series}, and \ref{Poisson}. 

The stochastic integral representation of strictly stable distributions is given by the mapping $\Xi_{\al}(\mu) := \law\big(\sek t^{-1/\al}d\xtm \big)$.
In Theorem \ref{main} we show that the range of $\Xi_{\al}$ coincides with the class of strictly $\al$-stable distributions  when $\al \ne 1$.  The case $\alpha=1$ is more delicate since the range of $\Xi_{1}$ is smaller than the class of strictly $1$-stable distributions. We describe precisely distributions representable by $\Xi_{1}$, from which it follows that every strictly $1$-stable distribution belongs to the range of $\Xi_{1}$ after certain deterministic translation. 

The relationship between stochastic integral and series representations of strictly stable distributions is captured when one studies the restriction of $\Xi_{\al}$ to the subdomain consisting of compound Poisson 
distributions. For such distributions $\mu$, $X^{(\mu)}_t = \sum_{j\colon\tau_j \le t} V_j$, 
where $\tau_j$ is the $j$th arrival time of a Poisson process and $\{V_j\}$ is a sequence of 
i.i.d.\ random variables in $\rd$, independent of $\{\tau_j\}$. 
In this case, at least formally, we can write 
\begin{equation} \label{LP}
   \sek t^{-1/\al}d\xtm = \sum_{j=1}^{\infty}\tau_j^{-1/\al}V_j.
\end{equation}
The right-hand side is a well-known series representation of stable distributions; 
see, e.g., Corollary 4.10 of \citet{Rosinski1990}, Proposition 1.4.1 of  \cite{S-T_book1994}. 
In Theorem~\ref{series} we investigate \eqref{LP} and characterize the class of distributions 
representable by either side of this equation.  
It follows that the series representation (without centering) is a special case of our stochastic integral representation. 

It is clear that $\Xi_{\al}$ is not injective on its whole domain.  
In Theorem~\ref{Poisson} we consider  $\Xi_{\al}$ restricted to subdomains of
some infinitely divisible distributions with finite L\'evy measures
supported by the unit sphere. On such subdomains, $\Xi_{\al}$ is injective and we characterize the corresponding ranges of  $\Xi_{\al}$.

One special property of $\Xi_{\al}$ is mentioned at the end of the paper in Remark \ref{iteration}. It says that the limit of the ranges of iterations of our mapping consists of only one distribution $\dl_0$.  This property makes $\Xi_{\al}$ quite different from  other  stochastic integral mappings considered in the past.

Finally, we remark that, as far as representations are concerned, 
the restriction to strictly stable distributions is not essential when  
$\al\ne 1$, because any $\al$-stable distribution is strictly stable  up to a convolution 
with a $\dl$-distribution. (Theorem 14.7 in \citet{Sato's_book1999}. 
See also the end of the section 2.)

\section{Preliminaries}

The characteristic function $\wh\mu(z)$, $ z\in \rd$, of $\mu\in I(\rd)$ is given by 
the L\'evy-Khintchine triplet $(A, \nu, \gm)$ as follows
$$
\wh\mu(z) = \exp\left \{ -\frac12 \la z, Az\ra + \int_{\rd} \left ( e^{i\la z,x\ra}
-1 - i\la z,x\ra\1_{\{|x|\leq 1\}}(x)\right ) \nu (dx) + i\la \gm ,z\ra\right \},
$$
where $A$ is a $d\times d$ symmetric nonnegative-definite matrix, $\nu$ is a measure 
(called the L\'evy measure of $\mu$) on
$\rd$ satisfying $\nu(\{0\})=0$ and $\int_{\rd} (|x|^2\wedge 1)\nu(dx)<\infty$ and $\gm\in\rd$.
We will write  $\mu= \mu_{(A,\nu,\gm)}$ to denote an infinitely divisible distribution 
with the L\'evy-Khintchine triplet $(A, \nu, \gm)$.
If $\mu= \mu_{(A,\nu,\gm)}$ satisfies $\int_{|x|> 1}|x|\nu(dx)<\infty$, then
there exists the mean $\gm^1\in \rd$ of $\mu$ such that
$$
\wh\mu(z) = \exp\left \{ -\frac12 \la z, Az\ra + \int_{\rd} \left ( e^{i\la z,x\ra}
-1 -i\la z, x\ra \right ) \nu (dx) + i\la \gm^1 ,z\ra\right \}
$$
and 
$$
\gm^1 = \gm + \int_{|x|>1}x\nu(dx).
$$
In this case, we will write $\mu =\mu _{(A, \nu, \gm^1)_1}$.
If $\mu= \mu_{(A,\nu,\gm)}$ satisfies $\int_{|x|\le 1}|x|\nu(dx)<\infty$, then
there exists $\gm^0\in \rd$ (called the drift of $\mu$) such that
$$
\wh\mu(z) = \exp\left \{ -\frac12 \la z, Az\ra + \int_{\rd} \left ( e^{i\la z,x\ra}
-1 \right ) \nu (dx) + i\la \gm^0 ,z\ra\right \}
$$
and 
$$
\gm^0 = \gm - \int_{|x|\leq 1}x\nu(dx).
$$
We will write $\mu =\mu _{(A, \nu, \gm^0)_0}$ in this case.

\begin{cor}\label{0=1}
Let $\mu =\mu_{(A,\nu,\gamma)}\in I(\rd)$. Then $\mu_{(A, \nu, 0)_0}=\mu_{(A, \nu, 0)_1}$ 
if and only if  $\int_{\rd} |x| \, \nu(dx) < \infty$, $\int_{\rd} x \, \nu(dx) = 0$ 
and $\gamma=  \int_{|x|\le 1} x\nu(dx)$.
\end{cor}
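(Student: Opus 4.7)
The plan is to unfold the shorthand $\mu_{(A,\nu,0)_0}$ and $\mu_{(A,\nu,0)_1}$ back to the standard L\'evy-Khintchine form (the one with compensator $\1_{\{|x|\le 1\}}$) and then compare the resulting drift parameters against $\gamma$ by uniqueness of the triplet. Read this way, the asserted equality $\mu_{(A,\nu,0)_0}=\mu_{(A,\nu,0)_1}$ is meaningful only when both of these distributions also coincide with $\mu=\mu_{(A,\nu,\gamma)}$, so one is really proving the three-way equality $\mu=\mu_{(A,\nu,0)_0}=\mu_{(A,\nu,0)_1}$.

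First I would note that $\mu_{(A,\nu,0)_0}$ is only defined when $\int_{|x|\le 1}|x|\,\nu(dx)<\infty$; splitting
\[
 e^{i\la z,x\ra}-1=\bigl(e^{i\la z,x\ra}-1-i\la z,x\ra\1_{\{|x|\le 1\}}(x)\bigr)+i\la z,x\ra\1_{\{|x|\le 1\}}(x)
\]
in its characteristic function identifies the general triplet of $\mu_{(A,\nu,0)_0}$ as $\bigl(A,\nu,\int_{|x|\le 1}x\,\nu(dx)\bigr)$. Symmetrically, $\mu_{(A,\nu,0)_1}$ requires $\int_{|x|>1}|x|\,\nu(dx)<\infty$, and the analogous splitting of $e^{i\la z,x\ra}-1-i\la z,x\ra$ identifies its general triplet as $\bigl(A,\nu,-\int_{|x|>1}x\,\nu(dx)\bigr)$.

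Having both representations simultaneously available therefore forces $\int_{\rd}|x|\,\nu(dx)<\infty$, and by uniqueness of the L\'evy-Khintchine triplet the three-way equality reduces to the pair of identities $\gamma=\int_{|x|\le 1}x\,\nu(dx)$ and $\int_{|x|\le 1}x\,\nu(dx)=-\int_{|x|>1}x\,\nu(dx)$; the second of these is equivalent to $\int_{\rd}x\,\nu(dx)=0$. The converse direction is obtained by running the same chain of identifications backwards. The argument is purely definitional; there is no real obstacle beyond keeping track of which integrability hypothesis is needed for each shorthand to be meaningful.
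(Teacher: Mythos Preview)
Your reading of the statement as a three-way equality $\mu=\mu_{(A,\nu,0)_0}=\mu_{(A,\nu,0)_1}$ is correct (otherwise the condition on $\gamma$ would be irrelevant), and your argument is sound: it is precisely the unwinding of the relations $\gamma^0=\gamma-\int_{|x|\le 1}x\,\nu(dx)$ and $\gamma^1=\gamma+\int_{|x|>1}x\,\nu(dx)$ stated immediately before the corollary in the paper. The paper does not give a separate proof of this corollary, treating it as an immediate consequence of those definitions, so your approach is exactly the intended one.
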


We also use the following polar decomposition of a L\'evy measure $\nu$.
If $\nu\neq 0$, then
there exist a measure $\lambda$ on $S=\{x\in\rd\colon |x|=1\}$ with $0<\lambda(S)<\infty$ and
a family $\{\nu_{\xi}, \xi\in S\}$ of measures on $(0,\infty)$ such that
$\nu_{\xi}(B)$ is measurable in $\xi$ for each $B\in\B((0,\infty))$,
$0<\nu_{\xi}((0,\infty))\le\infty$ for each $\xi\in S$ and
\begin{equation}\label{polar}
   \nu(B)=\int_S \lambda(d\xi)\int_0^{\infty} \1_B(r\xi)\nu_{\xi}(dr),\quad
B\in \B (\rd \setminus \{ 0\}).
\end{equation}
Measure $\lambda$ is  called  the spherical component of $\nu$ and $\nu_\xi$ its radial component. 
If $\nu(\rd)< \infty$, then we may and do assume that $\nu_\xi$ are probability measures. 
Indeed, consider a random vector $x \mapsto (\frac{x}{|x|}, |x|)$ under probability measure 
$\nu/\nu(\rd)$ and let $\nu_{\xi}$ be the conditional distribution of $|x|$ given that 
$\frac{x}{|x|}=\xi$.  
Then 
$(\lambda, \nu_{\xi})$ satisfy \eqref{polar}, 
with $\lambda(B) = \nu(\{x\ne0: \frac{x}{|x|} \in B\})$, $B\in \B (S)$.  

Let $S_{\al}(\rd)$  be the class of $\al$-stable distributions on $\rd$, $0<\al<2$. 
The  characteristic function of $\mu \in S_{\al}(\rd)$ is of the form: when $\al \ne 1$,
\begin{equation}\label{S}
\wh \mu(z)= \exp\left[ -\int_S |\la z,\xi\ra|^{\alpha }\left( 1- i \tan \frac{\pi \alpha}{2} 
\mathrm{sgn}\la z,\xi\ra \right) \lambda_1(d\xi) + i\la z, \tau \ra \right],
\end{equation}
and when $\al= 1$,
\begin{equation}\label{S1}
\wh \mu(z)= \exp\left[ -\int_S \left( |\la z,\xi\ra| + 
i \frac{2}{\pi}\la z,\xi\ra \log|\la z,\xi\ra|  \right) \lambda_1(d\xi) + i\la z, \tau \ra \right],
\end{equation}
where $\lambda_1$ is a finite measure on $S$, called the spectral measure of $\mu$, and 
$\ta \in \rd$ is a shift parameter.  
Recall that $\mu \in S_{\al}(\rd)$ is strictly stable when $\wh \mu(bz)=\wh \mu(z)^{b^{\al}}$ 
for all $b>0$ and $z\in \rd$. 
Let $S_{\al}^0(\rd)$ denote the class of strictly $\al$-stable distributions on $\rd$.
Then
$\mu\in S_{\al}^0(\rd)$ if and only if 
$$
\begin{cases}
\mu\in S_{\al}(\rd)\text{ and }\tau=0,&\text{when } \al\ne  1, \medskip\\
\mu\in S_1(\rd)\text{ and }\int_S\xi\lambda_1(d\xi)=0,&\text{when }\al=1. \medskip\\
\end{cases}
$$
(See, e.g., Theorem 14.10 in \citet{Sato's_book1999}.)

\vskip 5mm
\section{The results}

Recall that $\{X^{(\mu)}_t\}$ denotes a L\'evy process such that $\law(X_1)=\mu \in I(\rd)$.
Let $0<\al <2$.  Consider an improper stochastic integral defined by
$$
\sek t^{-1/\al}d\xtm = \lim_{\varepsilon\downarrow 0, \, T\uparrow\infty} \, 
\int_\varepsilon^T t^{-1/\al}d\xtm
$$
provided the limit in probability exists. In this case we will say that the improper stochastic 
integral is definable, see \citet{Sato2007}. 
Consider a mapping between infinitely divisible 
distributions $\mu \mapsto \Xi _{\al} (\mu)$ given by
\begin{equation}\label{int}
\Xi _{\al} (\mu) = \law \left ( \sek t^{-1/\al}d\xtm\right ).
\end{equation}
Let $\mathfrak D(\Xi _{\al})$ denote the domain of this mapping.  
The following characterization of  $\mathfrak D(\Xi _{\al})$ follows from 
Proposition 5.3 and  Example 4.5 of \citet{Sato2007}.

\begin{thm}\label{D}
(i)
If $0<\al <1$, then
$$
\mathfrak D(\Xi_{\al}) = \left \{ \mu=\mu_{(0,\nu,0)_0} \in I(\rd)
\colon  \int_{\rd}|x|^{\al}\nu(dx)<\infty \right \}.
$$

\n
(ii) 
If $\al=1$, then
\begin{align*}
\mathfrak D(\Xi_{1}) & = \biggl \{ \mu=\mu_{(0,\nu,0)_0} =\mu_{(0,\nu,0)_1}  \in I(\rd)
\colon  \int_{\rd}|x|~\nu(dx)<\infty,   \int_{\rd} x ~\nu(dx)=0, \\
& \hskip 8mm \lim_{\varepsilon\downarrow 0} \int_{|x|\le 1}x \log(|x| \vee \ep)~\nu(dx)    
\ \text{and} \ \lim_{T\to\infty} \int_{|x|> 1} x \log(|x| \wedge T) ~\nu(dx)  \ \text
{exist} \biggr \}\nonumber.\\
\end{align*}

\n
(iii)
If $1<\al<2$, then
$$
\mathfrak D(\Xi_{\al}) = \left \{ \mu=\mu_{(0,\nu,0)_1} \in I(\rd)
\colon \int_{\rd}|x|^{\al}\nu(dx)<\infty \right \}.
$$
\end{thm}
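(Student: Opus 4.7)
The theorem is presented as a consequence of Proposition 5.3 and Example 4.5 of \citet{Sato2007}. My plan is therefore to unfold how Sato's general criterion for the definability of the improper stochastic integral $\sek f(t)\,d\xtm$ specializes to $f(t)=t^{-1/\al}$. That criterion decomposes into three conditions on the L\'evy triplet $(A,\nu,\gm)$ of $\mu$: a Gaussian square-integrability, a small-jump condition $\sek \int_{\rd}(|f(t)x|^2\wedge 1)\,\nu(dx)\,dt<\infty$, and the convergence of an explicit drift integral involving the jump-compensation terms.

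The first two conditions handle themselves uniformly in $\al$. Since $\int_0^1 t^{-2/\al}\,dt=\infty$ for every $\al\in(0,2)$, Gaussian definability forces $A=0$. Applying Fubini--Tonelli to $\int_0^\infty (t^{-2/\al}|x|^2\wedge 1)\,dt$ with threshold $t=|x|^{\al}$ yields $\frac{2}{2-\al}|x|^{\al}$, so the small-jump condition reduces to $\int_{\rd}|x|^{\al}\,\nu(dx)<\infty$, which appears in all three cases of the theorem.

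The case split then emerges entirely from the drift integral. Since $|t^{-1/\al}x|\le 1$ iff $|x|\le t^{1/\al}$, the drift integrand takes the form
$$
\gm\, t^{-1/\al} + t^{-1/\al}\int_{\rd} x\bigl(\1_{\{|x|\le t^{1/\al}\}}-\1_{\{|x|\le 1\}}\bigr)\nu(dx).
$$
For $0<\al<1$, the bound $\int |x|^\al\,\nu(dx)<\infty$ gives $\int_{|x|\le 1}|x|\,\nu(dx)<\infty$, so the drift representation $\mu=\mu_{(0,\nu,\gm^0)_0}$ is available; a short rearrangement exhibits the integrand as $\gm^0 t^{-1/\al}$ plus an absolutely integrable remainder, and since $\int_0^1 t^{-1/\al}\,dt=\infty$ definability forces $\gm^0=0$. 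Symmetrically, for $1<\al<2$, $\int|x|^{\al}\,\nu(dx)<\infty$ produces the mean representation $\mu=\mu_{(0,\nu,\gm^1)_1}$, and because $\int_1^\infty t^{-1/\al}\,dt=\infty$ the drift condition collapses to $\gm^1=0$.

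The main obstacle is the case $\al=1$, where $\int_\ep^T t^{-1}\,dt=\log(T/\ep)$ diverges at both endpoints and no single representation suffices. The plan is to apply Fubini to the double integral above, which rewrites the drift contribution in terms of truncated logarithmic moments of $\nu$: at the upper end one encounters $\int_{1<|x|\le T}x\log(T/|x|)\,\nu(dx)$, and at the lower end $\int_{\ep<|x|\le 1}x\log(|x|/\ep)\,\nu(dx)$. Cancellation of the $\log T$ coefficient against $\gm\log T$ forces $\int_{|x|>1}x\,\nu(dx)$ to exist and $\gm^1=0$; cancellation of $\log\ep$ forces $\int_{|x|\le 1}x\,\nu(dx)$ to exist and $\gm^0=0$. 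By Corollary \ref{0=1} this is exactly $\int |x|\,\nu(dx)<\infty$, $\int x\,\nu(dx)=0$, together with the matching of the two triplets. Once these divergent coefficients are absorbed, the surviving pieces are precisely the two truncated logarithmic integrals whose existence is required in (ii). The delicate step is the bookkeeping of this separation---isolating the divergent $\log T$ and $\log\ep$ contributions from the genuinely convergent truncated-moment limits---which is where all the arithmetic of the $\al=1$ statement is concentrated.
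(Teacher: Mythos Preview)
Your proposal is correct and follows the same approach as the paper: invoke Sato's general definability criterion and specialize it to $f(t)=t^{-1/\al}$, with the $\al=1$ case handled by rewriting the drift condition via Fubini into the truncated logarithmic moments. The paper's own proof is terser---it simply cites Proposition~5.3 and Example~4.5 of \citet{Sato2007} for all three cases and records only the two Fubini identities
\[
\int_\varepsilon^1 t^{-1}dt \int_{|x|<t}x\,\nu(dx) = \int_{|x|\le 1}x \log(|x| \vee \ep)\,\nu(dx), \qquad
\int_1^T t^{-1}dt \int_{|x|>t}x\,\nu(dx) = \int_{|x|> 1} x \log(|x| \wedge T)\,\nu(dx)
\]
needed to translate Sato's condition into the form stated in (ii); your write-up is essentially an explicit unfolding of what that citation contains.
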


\bigskip

\begin{rem} \label{rem D}
Condition specifying when $\mu \in \mathfrak D(\Xi_{1})$ looks complicated, but there is a simple sufficient condition. Namely, if $\int_{\rd} |x| \, |\log|x|| ~\nu(dx) <\infty$,  $\int_{\rd} x ~\nu(dx)=0$, and $\gamma= \int_{|x|\le 1} x ~\nu(dx)$, then 
$\mu=\mu_{(0,\nu,\gamma)} \in \mathfrak D(\Xi_{1})$.  \\
\end{rem}
\bigskip	

The next three theorems are main results of this paper.  
The first one shows that every strictly stable law can be represented as the law of a stochastic 
integral in \eqref{int}, except for the case $\alpha=1$, when the representation is up to a well specified shift parameter.
The second one connects the integral and series representations of stable distributions. 
The third one gives a subdomain of $\mathfrak D(\Xi_{\al})$ on which $\Xi_{\al}$ is one-to-one. 

\begin{thm}\label{main}
Let $0<\al <2$.

(i) When $\alpha\neq 1$, we have
$$
\Xi_{\al}(\mathfrak D (\Xi_{\al}))=S_{\al}^0(\rd).
$$

(ii) When $\alpha= 1$, we have
\begin{equation}\label{range alpha=1}
\Xi_1(\mathfrak D (\Xi_1)) = \left\{\mu \in S_1^0(\rd)\colon  \tau \in \mathrm{span \ supp}
(\lambda_1)\right\},
\end{equation}
where, respectively, $\lambda_1$ and $\tau$ are the spectral measure and  the shift  
of $\mu$ given by \eqref{S1}.  Here $\mathrm{supp}(\lambda_1)$ denotes the support of $\lambda_1$.  
If $\lambda_1=0$, then we put  $\mathrm{span \ supp}(\lambda_1)=\{0\}$
by convention.
\end{thm}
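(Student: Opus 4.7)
My strategy is to compute the triplet $(0,\tilde\nu,\tilde\gm)$ of $\Xi_\al(\mu)$ in terms of that of $\mu$ via the transformation formulas of \citet{Sato2007}, recognize the resulting distribution as strictly $\al$-stable, and then establish surjectivity by exhibiting a compound Poisson preimage for every admissible target.

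For the inclusion $\Xi_\al(\mathfrak D(\Xi_\al))\subseteq S_\al^0(\rd)$, I would first write $\nu$ in polar form $(\lambda,\nu_\xi)$. Sato's formula gives $\tilde\nu(B)=\int_0^\infty\nu(t^{1/\al}B)\,dt$, and the substitution $u=t^{-1/\al}r$ in the radial integral converts this into the standard stable form
\begin{equation*}
\tilde\nu(B)=\int_S \lambda_1^*(d\xi)\int_0^\infty \1_B(u\xi)\,\frac{du}{u^{1+\al}},\qquad \lambda_1^*(d\xi)=\al\Bigl(\int_0^\infty r^\al\nu_\xi(dr)\Bigr)\lambda(d\xi),
\end{equation*}
with $\lambda_1^*$ finite thanks to the moment assumption in Theorem \ref{D}. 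Combined with $\tilde A=0$, this places $\Xi_\al(\mu)$ in $S_\al(\rd)$. To get strict stability I would then compute $\tilde\gm$. For $\al<1$, using the drift form $\mu=\mu_{(0,\nu,0)_0}$, and for $\al>1$, using the mean form $\mu=\mu_{(0,\nu,0)_1}$, a scaling argument reduces the shift to $\tau=0$ in representation \eqref{S}, so $\Xi_\al(\mu)\in S_\al^0(\rd)$. The case $\al=1$ is more subtle: $\int_\rd x\,\nu(dx)=0$ gives $\int_S\xi\,\lambda_1^*(d\xi)=0$ by Fubini, and a careful limiting procedure in which the $\log\ep$ and $\log T$ divergences cancel (using $\gm=\int_{|x|\le 1}x\,\nu(dx)=-\int_{|x|>1}x\,\nu(dx)$ from $\mathfrak D(\Xi_1)$) produces $\tilde\gm=-\int_\rd x\log|x|\,\nu(dx)=-\int_S\xi\bigl(\int_0^\infty r\log r\,\nu_\xi(dr)\bigr)\lambda(d\xi)$, which lies in $\mathrm{span\,supp}(\lambda)=\mathrm{span\,supp}(\lambda_1^*)$; passing from the L\'evy--Khintchine shift to the shift $\tau$ in \eqref{S1} only adds a further element of the same span.

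For surjectivity, given $\mu\in S_\al^0(\rd)$ with spectral measure $\lambda_1$, I take $\mu_0$ to be the compound Poisson law with L\'evy measure $\nu_0(d\xi)=\al^{-1}\lambda_1(d\xi)$ on $S$, centered according to the condition in Theorem \ref{D}; for $\al=1$, the hypothesis $\int_S\xi\,\lambda_1(d\xi)=0$ is exactly what makes the drift and mean forms of $\mu_0$ coincide. The first part of the proof then shows $\Xi_\al(\mu_0)=\mu$ when $\al\ne 1$. For $\al=1$, this base construction realizes some specific $\tau_0\in\mathrm{span\,supp}(\lambda_1)$, and to reach an arbitrary $\tau$ in that span I replace the radial delta at $1$ by $\nu_\xi=\delta_{a(\xi)}$ and rescale $\lambda(d\xi)=a(\xi)^{-1}\lambda_1(d\xi)$ so that the spectral measure is preserved; by the formula above, the resulting shift becomes $\int_S\xi\log a(\xi)^{-1}\lambda_1(d\xi)$ (plus the fixed conversion correction). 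The linear map $s(\cdot)\mapsto\int_S\xi\,s(\xi)\,\lambda_1(d\xi)$ has range exactly $\mathrm{span\,supp}(\lambda_1)$ by a Hahn--Banach argument, so every admissible $\tau$ is attainable.

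The main obstacle is the case $\al=1$: the shift of $\Xi_1(\mu)$ exists only after a cancellation of logarithmic divergences, its presence in $\mathrm{span\,supp}(\lambda_1)$ is a new geometric constraint absent when $\al\ne 1$, and matching this constraint on the surjectivity side forces a tuning of the radial components of $\nu$ that has no counterpart in the other regimes.
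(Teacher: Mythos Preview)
Your plan is essentially the paper's own route: compute the L\'evy measure of $\Xi_\al(\mu)$ via Sato's formula to see it has the $\al$-stable radial profile with spherical part proportional to $\int_0^\infty r^\al\nu_\xi(dr)\,\lambda(d\xi)$, and for surjectivity take $\nu$ supported on $S$ (radial deltas, tuned in direction for $\al=1$). One pleasant difference is that the paper proves the inclusion $\Xi_\al(\mathfrak D(\Xi_\al))\subset S_\al^0(\rd)$ by a direct probabilistic scaling argument on $Y=\int_0^\infty t^{-1/\al}\,dX_t^{(\mu)}$, showing $c_1Y+c_2\overline Y\eqd(c_1^\al+c_2^\al)^{1/\al}Y$ via a change of variables and the convolution semigroup, rather than computing $\tilde\gm$ explicitly; this bypasses any shift bookkeeping for $\al\ne 1$.

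There is one genuine gap in your $\al=1$ inclusion argument. You assert that the limiting procedure yields $\tilde\gm=-\int_{\rd} x\log|x|\,\nu(dx)=-\int_S\xi\bigl(\int_0^\infty r\log r\,\nu_\xi(dr)\bigr)\lambda(d\xi)$, and then infer membership in $\mathrm{span\,supp}(\lambda)$ from the form of this integral. But $\mathfrak D(\Xi_1)$ only requires that the limits $\lim_{\ep\downarrow 0}\int_{|x|\le 1}x\log(|x|\vee\ep)\,\nu(dx)$ and $\lim_{T\to\infty}\int_{|x|>1}x\log(|x|\wedge T)\,\nu(dx)$ exist; it does \emph{not} require $\int_{\rd}|x|\,\bigl|\log|x|\bigr|\,\nu(dx)<\infty$ (that is merely the sufficient condition of Remark~\ref{rem D}). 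Hence the coefficient function $\xi\mapsto\int_0^\infty r\log r\,\nu_\xi(dr)$ need not be finite or bounded, and your $\int_S \xi\,s(\xi)\,\lambda(d\xi)$ representation does not directly apply. The paper repairs exactly this point: for each fixed $\ep<1<T$ it writes the truncated shift $\tau_\ep^T$ as $-\int_S\xi\,h_{\ep,T}(\xi)\,\lambda(d\xi)$ with $h_{\ep,T}$ \emph{bounded} (since $\rho_\xi$ is a probability measure), invokes the span--supp lemma to place $\tau_\ep^T$ in $\mathrm{span\,supp}(\lambda_1)$, and only then passes to the limit, using that this finite-dimensional subspace is closed. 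Your surjectivity construction for $\al=1$ (radial delta at $a(\xi)$ with compensating rescaling of $\lambda$) matches the paper's, where $a(\xi)=g(\xi)=e^{-f(\xi)}$ with $f\in L^\infty(S,\lambda)$ realizing the target $\tau$.
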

\bigskip

Let $\mathrm{CP}_0(\rd)$ denote the class of compound Poisson distributions on $\rd$; 
$\mathrm{CP}_0(\rd)= \{\mu_{(0,\nu,0)_0}: \ \nu(\rd)< \infty \}$.

\begin{thm}\label{series}
Let $\mu=\mu_{(0,\nu,0)_0}$ be a compound Poisson distribution on $\rd$ and let $\{X^{(\mu)}_t\}$ 
be the associated L\'evy process.  
Hence $X^{(\mu)}_t = \sum_{j\colon \tau_j\le t} V_j$, where $\{\tau_j\}$ is a sequence of 
arrival times in a Poisson process with rate $\theta=\nu(\rd)$ and $\{V_j\}$ is 
an i.i.d.\ sequence in $\rd$ with the common distribution $\theta^{-1}\nu$, 
independent of $\{\tau_j\}$.
Then
\begin{equation}\label{ser1}
\sek t^{-1/\al}d\xtm = \sum_{j=1}^{\infty}\tau_j^{-1/\al}V_j 
\end{equation}
in the sense that the integral is definable if and only if the series converges a.s.\ 
and then the equality holds a.s.  
Consequently, the series in \eqref{ser1} converges a.s.\ if and only 
if $\mu_{(0, \,\mathcal{L}(V_1), \, 0)_0} \in \mathfrak D (\Xi_{\al})$.  
Furthermore,  $\Xi_{\al}(\mathfrak D (\Xi_{\al}) \cap \mathrm{CP}_0(\rd))$ is a subclass 
of $S^0_{\al}(\rd)$ consisting of distributions representable by either side of \eqref{ser1}.
We have, when $0<\al < 1$, 
\begin{equation}\label{fin0}
\Xi_{\al}(\mathfrak D (\Xi_{\al}) \cap \mathrm{CP}_0(\rd)) = S^0_{\al}(\rd);
\end{equation}
when $\al=1$, 
\begin{equation}\label{fin1}
\Xi_{1}(\mathfrak D (\Xi_{1}) \cap \mathrm{CP}_0(\rd)) = \left\{\mu \in S_1^0(\rd)\colon  
\tau \in \mathrm{span \ supp}(\lambda_1)\right\};
\end{equation}
and when $1<\al<2$, 
\begin{align}\label{fin2}
&  \Xi_{\al}  (\mathfrak D (\Xi_{\al}) \cap \mathrm{CP}_0(\rd)) \\
& = \Big\{\mu \in S^0_{\al}(\rd)\colon \exists q(\xi)>0,  \int_S q(\xi) \xi ~ \lambda_1(d\xi) = 0 \ 
\text{and}   \int_S q(\xi)^{\frac{\al}{\al -1}}~ \lambda_1(d\xi) < \infty  \Big\}  \nonumber \\
&   \subsetneqq S^0_{\al}(\rd). \nonumber
\end{align}
In the above, respectively, $\lambda_1$ and $\tau$ are the spectral measure and  the shift  of $\mu$ 
given by \eqref{S}--\eqref{S1}. 
\end{thm}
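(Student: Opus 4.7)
My plan is to exploit the pathwise nature of compound Poisson paths together with Sato's formula for the L\'evy measure of a stochastic integral. For the identity \eqref{ser1}, I first note that $X^{(\mu)}$ is piecewise constant with jumps only at $\tau_j$, so for the deterministic integrand $t\mapsto t^{-1/\al}$ the stochastic integral reduces pathwise to a finite sum
$$
\int_\ep^T t^{-1/\al}\,dX_t^{(\mu)} = \sum_{j\colon \ep<\tau_j\le T} \tau_j^{-1/\al} V_j.
$$
Since $\tau_1 > 0$ a.s., for each $\omega$ and small enough $\ep$ the constraint $\ep<\tau_j$ becomes vacuous; and $\{j\colon \tau_j\le T\}=\{1,\dots,N_T\}$ with $N_T \uparrow \infty$ a.s. Thus the double limit equals $\lim_N S_N$ with $S_N=\sum_{j=1}^N \tau_j^{-1/\al} V_j$ whenever either exists. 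The delicate direction is to pass from Sato's convergence in probability to a.s.\ convergence of $S_N$: conditionally on $\mcal F_\tau=\sg(\tau_1,\tau_2,\dots)$ the $V_j$ are independent with fixed coefficients $\tau_j^{-1/\al}$, so the It\^o--Nisio theorem applied conditionally yields a.s.\ convergence. The equivalence with $\mu_{(0,\law(V_1),0)_0}\in\mathfrak D(\Xi_\al)$ is then immediate from the definition of definability, and the inclusion of the restricted range in $S^0_\al(\rd)$ is inherited from Theorem~\ref{main}.

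For the range descriptions I would compute the L\'evy measure of $\Xi_\al(\mu)$ when $\nu$ is finite by writing it in polar form $\nu(dx)=\ld_\nu(d\sg)\,\nu_\sg(dr)$. The change of variables $s=t^{-1/\al}$ converts Sato's formula into
$$
\nu^{*}(B) = \int_S m_\al(\sg)\,\ld_\nu(d\sg)\int_0^\infty \al u^{-\al-1}\mathbf 1_B(u\sg)\,du,\q m_\al(\sg)=\int_0^\infty r^\al\,\nu_\sg(dr),
$$
identifying $\Xi_\al(\mu)$ as $\al$-stable with spectral measure $\ld_1=c_\al m_\al\,\ld_\nu$ for the appropriate normalising constant $c_\al$. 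When $0<\al<1$, prescribing any finite $\ld_1$ on $S$ and taking $\nu_\sg=\dl_1$, $\ld_\nu=c_\al^{-1}\ld_1$ produces a compound Poisson $\mu\in\mathfrak D(\Xi_\al)$ realising that $\ld_1$, proving \eqref{fin0}. When $\al=1$, the mean constraint $\int x\,\nu(dx)=0$ pulls back (via $\ld_\nu=(c_1 m_1)^{-1}\ld_1$) to $\int_S\sg\,\ld_1(d\sg)=0$, which is the strict $1$-stability assumption itself; exploiting the remaining freedom in $\nu_\sg$ to vary the shift within $\mrm{span\ supp}(\ld_1)$ shows that the restricted range coincides with that of Theorem~\ref{main}(ii), giving \eqref{fin1}.

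The hardest case is $\al>1$, where $\mu\in\mathfrak D(\Xi_\al)\cap\mrm{CP}_0(\rd)$ additionally demands $\int x\,\nu(dx)=\int_S \sg\, m(\sg)\,\ld_\nu(d\sg)=0$ with $m(\sg)=\int_0^\infty r\,\nu_\sg(dr)$. Setting $q(\sg)=m(\sg)/m_\al(\sg)$ and substituting $\ld_\nu=(c_\al m_\al)^{-1}\ld_1$, this constraint becomes $\int_S q(\sg)\sg\,\ld_1(d\sg)=0$, while $\nu(\rd)<\infty$ (with probability $\nu_\sg$) becomes $\int_S m_\al(\sg)^{-1}\,\ld_1(d\sg)<\infty$. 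The key technical step is a sharp Jensen estimate: convexity of $r\mapsto r^\al$ gives $m_\al\ge m^\al$, hence $q\le m_\al^{-(\al-1)/\al}$ and so $m_\al^{-1}\ge q^{\al/(\al-1)}$, with equality attained by the Dirac choice $\nu_\sg=\dl_{a(\sg)}$, $a(\sg)=q(\sg)^{-1/(\al-1)}$. Consequently a realising $\nu$ exists iff some positive $q$ satisfies both $\int_S q(\sg)\sg\,\ld_1(d\sg)=0$ and $\int_S q(\sg)^{\al/(\al-1)}\,\ld_1(d\sg)<\infty$, which is \eqref{fin2}. The strict inclusion $\subsetneqq S^0_\al(\rd)$ is then witnessed by any $\ld_1=\dl_{\sg_0}$, $\sg_0\in S$: for such $\ld_1$, $\int q\sg\,d\ld_1=q(\sg_0)\sg_0\ne 0$ for every positive $q$, so this strictly $\al$-stable law lies outside the restricted range.
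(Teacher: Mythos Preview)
Your treatment of the range identities \eqref{fin0}--\eqref{fin2} is essentially the paper's argument: the same polar decomposition, the same Jensen inequality $m_\al(\sg)\ge m(\sg)^\al$ (equivalently $q^{\al/(\al-1)}\le m_\al^{-1}$), and the same Dirac construction $\nu_\sg=\dl_{q(\sg)^{-1/(\al-1)}}$ for the converse when $1<\al<2$. The $\al\le 1$ cases likewise reduce to the construction already carried out in the proof of Theorem~\ref{main}, exactly as the paper does. The strict inclusion via $\ld_1=\dl_{\sg_0}$ is also the paper's example.

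Where you diverge from the paper is in the equivalence underlying \eqref{ser1}. The paper does \emph{not} argue pathwise-plus-It\^o--Nisio; instead it invokes Theorem~4.1 of Rosi\'nski~(2001) to characterize a.s.\ convergence of the LePage series by the two conditions $E|V|^\al<\infty$ and existence of the centering limit $a=\lim_{T\to\infty}\int_0^T E[t^{-1/\al}V\1_{\{|t^{-1/\al}V|\le1\}}]\,dt$, and then checks case-by-case (with nontrivial estimates when $\al<1$, $\al=1$, $\al>1$) that these coincide with the domain conditions of Theorem~\ref{D}. Only after this does it use the pathwise identity $\int_0^T t^{-1/\al}\,dX_t^{(\mu)}=S_{N_T}$ to conclude equality. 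Your route is shorter in principle, but the step ``It\^o--Nisio applied conditionally'' hides a genuine gap: definability gives \emph{unconditional} convergence in probability of $S_{N_T}$, and passing to \emph{conditional} convergence in probability of $S_N$ given $\mcal F_\tau$ (which is what It\^o--Nisio needs) is not automatic---$E[P(|S_n-S_m|>\ep\mid\mcal F_\tau)]\to0$ does not force $P(|S_n-S_m|>\ep\mid\mcal F_\tau)\to0$ for a.e.\ $\tau$, and a.s.\ convergence of $S_N$ along the random subsequence $N_T$ does not by itself upgrade to the full sequence for sums of independents.

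A clean way to close this gap, if you want to keep your strategy, is to drop the conditioning and apply It\^o--Nisio (or L\'evy's equivalence theorem) directly to the additive process $T\mapsto\int_0^T t^{-1/\al}\,dX_t^{(\mu)}$, whose increments over disjoint intervals are independent; convergence in probability of this process then yields a.s.\ convergence as $T\to\infty$, and the pathwise identity with $S_{N_T}$ finishes. Alternatively, cite the standard equivalence of convergence modes for LePage-type Poisson series. Either substitution makes your argument complete; as written, the conditional invocation of It\^o--Nisio does not.
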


\bigskip

The following example sheds some light on the nature of \eqref{fin2}.

\begin{ex}
Let $\mu \in S_{\al}^0(\R^2)$, $1<\al<2$, have the spectral measure  $\lambda_1$ supported by three 
vertices $\xi_1, \xi_2, \xi_3 \in S$ of a proper triangle $\Delta$. Then 
\begin{align*} 
\mu \ \text{is representable by either side of \eqref{ser1}} & \Leftrightarrow  \mu \in \Xi_{\al}  (\mathfrak D (\Xi_{\al}) \cap \mathrm{CP}_0(\R^2)) \\\
& \Leftrightarrow \Delta\text{ is an acute triangle.}   
\end{align*}
Indeed, $\Delta$ is an acute triangle if and only if $0$ belongs to the interior of $\Delta$, that is, 
$p_1\xi_1+ p_2\xi_2+p_3\xi_3=0$ for some $p_1,p_2,p_3>0$.  
Define a function $q$ on $S$ by $q(\xi_i)=p_i/\lambda_1(\{\xi_i\})$, $i=1,2,3$, and 
let $q(\xi)=1$ otherwise. Such function satisfies \eqref{fin2}. 
The converse is clear.
\end{ex}

\bigskip

Let $\mathrm{CP}_0(S)$ stand for the totality of compound Poisson distributions on $\rd$ with 
finite L\'evy measure supported on $S$
and let $\mathrm{CP}_1(S)$ be the totality of infinitely divisible distributions on $\rd$ with
Gaussian covariance matrix $0$, finite L\'evy measure supported on $S$ and mean $0$.
Then $\mathrm{CP}_0(S)\subset \mathfrak D (\Xi_{\al})$ for $0<\alpha<1$,
$\mathrm{CP}_0(S)\cap\mathrm{CP}_1(S)\subset \mathfrak D (\Xi_1)$,
and $\mathrm{CP}_1(S)\subset \mathfrak D (\Xi_{\al})$ for $1<\alpha<2$.

\begin{rem}\label{injective}
The mappings $\Xi_\alpha,0<\alpha<2,$ are not injective.
Let us prove it in the case $\alpha=1$; the proof for $\alpha\neq1$ is similar.
Let $\wt\mu=\wt\mu_{(0,\wt\nu,0)}\in \Xi_1(\mathfrak D(\Xi_1))$.
Then
\begin{align*}
\wt\nu(B) 
&=\int_S \wt\ld (d\xi) \sek \1_B(r\xi)r^{-2}dr\\
&=\sek du\int_S \wt\ld (d\xi) \sek \1_B(u^{-1}r\xi)\delta_1(dr)\\
&=\sek du\int_S \wt\ld (d\xi) \sek \1_B(u^{-1}r\xi)2^{-1}\delta_2(dr).
\end{align*}
Let $\mu_1$ and $\mu_2$ have the L\'evy-Khintchine triplets $(0,\nu_1,0)$ and $(0,\nu_2,0)$, 
where $\nu_1$ and $\nu_2$ have polar decompositions $(\wt\ld,\delta_1)$
and $(\wt\ld,2^{-1}\delta_2)$, respectively.
Then $\mu_1,\mu_2\in\mathfrak D(\Xi_1)$, $\mu_1\neq \mu_2$ and $\Xi_1(\mu_1)=\Xi_1(\mu_2)=\wt\mu$.
See also Remark 6.4 of \citet{Barndorff-NielsenRosinskiThorbjornsen2008}.
However, as shown in what follows, the restrictions $\Xi_\alpha|_{\mathrm{CP}_0(S)}$ with $0<\alpha<1$,
$\Xi_1|_{\mathrm{CP}_0(S)\cap\mathrm{CP}_1(S)}$ and $\Xi_\alpha|_{\mathrm{CP}_1(S)}$ with $1<\alpha<2$
are injective.
\end{rem}

\begin{thm}\label{Poisson}
(i) When $0<\alpha<1$, we have
$$
\Xi_{\al}(\mathrm{CP}_0(S))=S_{\al}^0(\rd)
$$
and the restriction $\Xi_\alpha|_{\mathrm{CP}_0(S)}$ is injective.
Especially, in the case $d=1$, 
$$
S_{\al}^0(\R)=\left\{\law\left(\int_0^\infty t^{-1/\alpha}d(N_1(at)-N_2(bt))\right)
\colon a,b\geq 0\right\},
$$
where $\{N_1(t)\}$ and $\{N_2(t)\}$ are independent Poisson processes with unit rate.

(ii) When $\alpha= 1$, we have
$$
\Xi_1(\mathrm{CP}_0(S)\cap\mathrm{CP}_1(S))=\{\mu\in S_1^0(\rd)\colon 
\text{the shift parameter \(\tau\) in \eqref{S1} is 
\(0\)}\}
$$
and the restriction $\Xi_1|_{\mathrm{CP}_0(S)\cap\mathrm{CP}_1(S)}$ is injective.
Especially, in the case $d=1$, 
$$
\{\mu\in S_1^0(\R)\colon \tau=0\}=\left\{\law\left(\int_0^\infty 
t^{-1}d(N_1(at)-N_2(at))\right)\colon a\geq 0\right\}.
$$

(iii) When $1<\alpha<2$, we have
$$
\Xi_{\al}(\mathrm{CP}_1(S))=S_{\al}^0(\rd)
$$
and the restriction $\Xi_\alpha|_{\mathrm{CP}_1(S)}$ the injective.
Especially, in the case $d=1$,
$$
S_{\al}^0(\R)=\left\{\law\left(\int_0^\infty t^{-1/\alpha}d(N_1(at)-N_2(bt)-(a-b)t)\right)
\colon a,b\geq 0\right\}.
$$
\end{thm}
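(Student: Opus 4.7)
The plan is to exploit the fact that when $\mu$ is a compound Poisson distribution whose L\'evy measure $\lambda$ is supported on the unit sphere $S$, the mapping $\Xi_\al$ is essentially a relabeling: the finite measure $\lambda$ becomes, up to an explicit constant $c_\al$, the spectral measure of the stable image in \eqref{S}--\eqref{S1}. Injectivity will be immediate from this correspondence, and surjectivity will reduce to verifying that the shift $\tau$ of the image vanishes.

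I would start by computing the L\'evy-Khintchine triplet of $\Xi_\al(\mu)$ via the formulas of \citet{Sato2007}. For $\mu$ with L\'evy measure $\lambda$ on $S$, the resulting L\'evy measure is
\begin{equation*}
\widetilde{\nu}(B) = \int_0^\infty \int_S \1_B(t^{-1/\al}\xi)\,\lambda(d\xi)\,dt = \int_S \lambda(d\xi)\int_0^\infty \1_B(r\xi)\,\al\, r^{-\al-1}\,dr,
\end{equation*}
after the substitution $r = t^{-1/\al}$. Thus $\widetilde{\nu}$ has polar decomposition $(\lambda, \al r^{-\al-1}\,dr)$, which is exactly the L\'evy measure of an $\al$-stable distribution whose spectral measure $\lambda_1$ (in the normalization of \eqref{S}--\eqref{S1}) is a fixed constant multiple of $\lambda$. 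Injectivity of $\Xi_\al$ on each subdomain then follows at once: $\lambda$ is recovered from the spherical component of $\widetilde{\nu}$, and in each of $\mathrm{CP}_0(S)$, $\mathrm{CP}_1(S)$, and their intersection the distribution $\mu$ is pinned down by $\lambda$ alone.

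Next, I would verify that the shift parameter $\tau$ in \eqref{S}--\eqref{S1} vanishes. For $0<\al<1$ the input $\mu\in\mathrm{CP}_0(S)$ has drift $0$, the image has drift $0$, and matching the $(0,\widetilde{\nu},0)_0$ form against \eqref{S} gives $\tau = 0$. For $1<\al<2$, $\mu\in\mathrm{CP}_1(S)$ has mean zero, so the image has mean zero; since the mean of an $\al$-stable distribution with $\al>1$ equals $\tau$ in \eqref{S}, we again conclude $\tau = 0$. For $\al = 1$ both conditions are imposed, which by Corollary \ref{0=1} forces $\int_S \xi\,\lambda(d\xi) = 0$; this constraint propagates to the image as $\int_S \xi\,\lambda_1(d\xi) = 0$ (strict $1$-stability) together with $\tau = 0$ in \eqref{S1}. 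Surjectivity onto each claimed range then follows by inverting: given the target distribution with spectral measure $\lambda_1$, take $\mu$ to be the CP distribution (in the appropriate subclass) with L\'evy measure $c_\al^{-1}\lambda_1$. The $d=1$ specializations reduce to writing a finite measure on $S = \{-1,+1\}$ as $a\dl_1 + b\dl_{-1}$, whence the L\'evy process is $N_1(at) - N_2(bt)$, with the drift correction $-(a-b)t$ in case (iii) encoding the mean-zero centering of $\mathrm{CP}_1(S)$.

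The main obstacle will be the case $\al = 1$, where the logarithmic term in \eqref{S1} interacts with the delicate definition of the improper integral in Theorem \ref{D}(ii). One must combine the two truncated $\log$-type limits defining $\mathfrak D(\Xi_1)$ with the L\'evy-Khintchine computation to verify that $\tau = 0$ holds precisely when $\int_S \xi\,\lambda(d\xi) = 0$. Once this bookkeeping is settled, the computations for $\al \ne 1$ are comparatively direct, the constant $c_\al$ is read off from standard stable normalizations, and the three parts of the theorem fall out together.
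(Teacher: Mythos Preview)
Your proposal is correct and follows essentially the same route as the paper: compute the L\'evy measure of the image to read off the spectral measure as a constant multiple of $\lambda$ (yielding both injectivity and the form of the range), then verify $\tau=0$ by tracking the drift, the mean, or $\wt\gamma$ in the three respective cases, with the $d=1$ statements falling out from $S=\{-1,1\}$. One remark: the $\al=1$ case you flag as the main obstacle is in fact simpler than you anticipate, because when $\nu$ is supported on $S$ one has $|x|=1$ identically, so the truncation indicator $\1_{\{|t^{-1}x|\le 1\}}$ becomes $\1_{\{t\ge 1\}}$ and the formula \eqref{tg} for $\wt\gamma$ collapses to $\lim_{T\to\infty}\log T\cdot\int_S \xi\,\nu(d\xi)=0$ with no delicate logarithmic bookkeeping required.
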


In the case (ii) of Theorem \ref{Poisson}, the range of 
$\Xi_1|_{\mathrm{CP}_0(S)\cap\mathrm{CP}_1(S)}$ is smaller than that of $\Xi_1$.
Hence there is a subdomain $\mathfrak D$ such that 
$\mathrm{CP}_0(S)\cap\mathrm{CP}_1(S)\subsetneqq \mathfrak D\subsetneqq \mathfrak D(\Xi_1)$,
$\Xi_1(\mathfrak D)=\Xi_1(\mathfrak D(\Xi_1))$ and the restriction $\Xi_1|_{\mathfrak D}$ is injective.
It is an interesting question to characterize the class $\mathfrak D$.

\vskip 5mm
\section{Proofs}

\begin{proof}[Proof of Theorem \ref{D}]
Statements (i) and (iii) follow from  Proposition 5.3 and Example 4.5 of 
\citet{Sato2007}, as does  (ii),
because
$$
\int_\varepsilon^1t^{-1}dt \int_{|x|<t}x\nu(dx) = \int_{|x|\le 1}x \log(|x| \vee \ep)~\nu(dx). 
$$
and
$$
\int_1^Tt^{-1}dt \int_{|x|>t}x\nu(dx) = \int_{|x|> 1} x \log(|x| \wedge T) ~\nu(dx) 
$$
The condition $\int |x| ~ \nu(dx) <\infty$ justifies the interchange of the order of 
integration in these integrals.
\end{proof}
\bigskip

The following lemma is needed for the proof of the next theorem.
\begin{lem}\label{shift}
Let $\lambda$ be a non-zero finite measure on $S$. 
Then, for any $a \in \rd$, $a \in \mathrm{span \ supp}(\lambda)$ if and only if 
$a = \int_{S} \xi f(\xi) ~\lambda(d\xi)$ for some $f \in L^{\infty}(S, \lambda)$.
\end{lem}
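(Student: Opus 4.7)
The plan is to set $V := \mathrm{span\ supp}(\lambda)$ and study the linear map $T\colon L^{\infty}(S,\lambda) \to \rd$ defined by $T(f) = \int_S \xi f(\xi)\,\lambda(d\xi)$. The lemma amounts to the identity $T(L^{\infty}(S,\lambda)) = V$.

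For the easy inclusion ``$\Leftarrow$'', assume $a = T(f)$. Since $\lambda$ is concentrated on $\mathrm{supp}(\lambda) \subset V$ and $V$ is a closed linear subspace of $\rd$, the integrand $\xi f(\xi)$ lies in $V$ for $\lambda$-a.e.\ $\xi$; a standard approximation by Riemann sums (valid because the integrand is $\lambda$-integrable) then places the integral in $V$.

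For the nontrivial direction ``$\Rightarrow$'', I would argue by duality. Set $W := T(L^{\infty}(S,\lambda))$; by the previous step $W \subset V$. Suppose for contradiction that $W \subsetneq V$. Since $V$ is finite-dimensional, there exists a linear functional $\phi\colon \rd \to \R$ such that $\phi \equiv 0$ on $W$ but $\phi \not\equiv 0$ on $V$. For every $f \in L^{\infty}(S,\lambda)$,
$$
\int_S \phi(\xi)\,f(\xi)\,\lambda(d\xi) \;=\; \phi\bigl(T(f)\bigr) \;=\; 0,
$$
so the bounded measurable function $\xi \mapsto \phi(\xi)$ must vanish $\lambda$-a.e.\ on $S$ (choose, e.g., $f = \mathrm{sgn}(\phi)\,\1_S$). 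Since $\phi$ is continuous, the set $\{\phi = 0\}$ is closed, so it contains $\mathrm{supp}(\lambda)$; hence $\phi$ vanishes on $\mathrm{span\ supp}(\lambda) = V$, contradicting the choice of $\phi$. Therefore $W = V$, which is the desired conclusion.

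I do not expect a real obstacle here: the only subtlety is packaging the duality cleanly, and the key input is merely that $V$ is finite-dimensional (so every proper subspace admits a separating linear functional) together with the definition of $\mathrm{supp}(\lambda)$. Choosing $f = \mathrm{sgn}(\phi)$ (or, to stay safely in $L^{\infty}$, $f = \phi$ itself, which is bounded on $S$) turns the vanishing of $\phi(T(f))$ into pointwise $\lambda$-a.e.\ vanishing of $\phi$, which is what drives the contradiction.
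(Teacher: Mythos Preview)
Your proof is correct and takes essentially the same approach as the paper: both argue by duality, showing that any linear functional (equivalently, any vector $p\in\rd$) that annihilates the image $W=T(L^\infty)$ must satisfy $\la p,\xi\ra=0$ for $\lambda$-a.e.\ $\xi$, and hence vanish on $\mathrm{span\ supp}(\lambda)$. The paper packages this via orthogonal complements (proving $H^\perp=K^\perp$, hence $H=K$) rather than as a contradiction, but the content is identical.
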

\begin{proof}
Consider a linear subspace $H$ of $\rd$ given by
$$
H=\left\{\int_{S} \xi f(\xi) ~\lambda(d\xi)\colon   f \in L^{\infty}(S, \lambda) \right\}
$$
and let $H^{\perp}$ be its  orthogonal complement.
Let $K=\mathrm{span \ supp}(\lambda)$ and  $K^{\perp}$ be the orthogonal complement  of $K$.  
If $p\in H^{\perp}$, then
$$
\int_{S} \la p, \xi \ra f(\xi) ~\lambda(d\xi) = \left\la p, \int_{S} \xi 
f(\xi) ~\lambda(d\xi)\right\ra =0
$$
for all $f \in L^{\infty}(S, \lambda)$, which implies $\la p, \xi \ra =0$ for $\lambda$-almost 
all $\xi$. Hence $p\in K^{\perp}$. 
The converse inclusion, $K^{\perp} \subset H^{\perp}$ is obvious from the displayed equality, 
so that $H^{\perp} = K^{\perp}$. 
This proves $H=K$.
\end{proof}

\bigskip

\begin{proof}[Proof of Theorem \ref{main}]
   
We first show that
$\Xi_{\al}(\mathfrak D (\Xi_{\al}))\subset S_{\al}^0(\rd)$ for all $\alpha\in(0,2)$.
It is enough to prove the strict stability of $Y := \sek t^{-1/\al}dX_t^{(\mu)}$ 
when $\mu \in \mathfrak D (\Xi_{\al})$.
Let $\overline Y$ be an independent copy of $Y$.
Then $\overline Y\eqd\sek t^{-1/\al}d\overline X_t^{(\mu)}$
with an independent copy $\{\overline X_t^{(\mu)}\}$ of $\{X_t^{(\mu)}\}$.
For any $c_1, c_2>0$, we have
\begin{align*}
c_1Y & + c_2\overline Y\\
& = \sek (c_1^{-\al}t)^{-1/\al}dX_t^{(\mu)} + \sek (c_2^{-\al}t)^{-1/\al}d\overline X_t^{(\mu)}\\
& = \sek s^{-1/\al}dX_{c_1^{\al}s}^{(\mu)} + \sek s^{-1/\al}d\overline X_{c_2^{\al}s}^{(\mu)}
 \eqd \sek s^{-1/\al}d\left ( X_s^{(\mu^{c_1^{\al}})} + \overline X_s^{(\mu^{c_2^{\al}})}\right )\\
& \eqd \sek s^{-1/\al}d X_s^{(\mu^{c_1^{\al}}*\mu^{c_2^{\al}})} 
\eqd \sek s^{-1/\al}d X_s^{(\mu^{c_1^{\al}+c_2^{\al}})}\\
& \eqd \sek s^{-1/\al}d X_{(c_1^{\al}+c_2^{\al})s}^{(\mu)}
 = \sek \left ( (c_1^{\al}+ c_2^{\al})^{-1}t\right )^{-1/\al}dX_t^{(\mu)}\\
& = (c_1^{\al}+ c_2^{\al})^{1/\al}\sek t^{-1/\al}dX_t^{(\mu)}
 = (c_1^{\al}+ c_2^{\al})^{1/\al} Y.
\end{align*}
This shows the strict stability of $Y$.

We will also need the following relations. If $\mu=\mu_{(0, \nu, \gamma)} \in \mathfrak D (\Xi_{\al})$,
then $\Xi_\alpha(\mu)=\wt\mu=\wt\mu_{(0, \wt\nu, \wt\gamma)}$, where
\begin{align}\label{}
\wt\nu(B) &=\int_0^{\infty} \int_{\rd} \1_B(t^{-1/\al}x) ~ \nu(dx) dt \label{tn} \\
&=  \int_{\R^d} \int_0^{\infty}\1_B\left(r \frac{x}{|x|}\right) \al r^{-\al-1} |x|^{\al} ~ dr\nu(dx)  
\nonumber \\
&= \int_{S} \lambda(d\xi) \int_0^{\infty} \1_B(r\xi) r^{-\al-1} ~ dr, \nonumber
\end{align}
with
\begin{equation}\label{la}
\lambda(B) = \al  \int_{\R^d} \1_B\left(\frac{x}{|x|}\right) |x|^{\al} ~ \nu(dx),
\end{equation}
and 
\begin{equation}\label{tg}
\wt\gamma = \lim_{\varepsilon\downarrow 0, \, T \uparrow \infty} \, \int_\varepsilon^Tt^{-1/\al}dt
\left(\gamma+\int_\rd x\left(\1_{\{|t^{-1/\al}x|\leq 1\}}-\1_{\{|x|\leq 1\}}\right)
\nu(dx)\right).\end{equation}
It follows that the spectral measure $\lambda_1$ of $\wt\mu$ is given by $\lambda_1=|\Gamma(-\al) 
\cos \frac{\pi \al}{2}| \cdot \lambda$ when $\al \ne 1$ and $\lambda_1=\frac{\pi}{2} \lambda$ 
when $\al=1$, where $\lambda$ is given by \eqref{la}; see the proof of  
Theorem 14.10 in \citet{Sato's_book1999}. 
\medskip

(i) ($\al \ne 1$). We only need to show that 
$S_{\al}^0(\rd) \subset  \Xi_{\al}(\mathfrak D (\Xi_{\al}))$. 

Case $0<\al<1$: \, 
If $\wt\mu=\wt\mu_{(0, \wt \nu, \wt\gm^0)_0} \in S_{\al}^0(\rd)$ then $\wt\gm^0=0$.
Take $\mu=\mu_{(0,\nu,0)_0}$ with $\nu=\alpha^{-1}|\Gamma(-\al) 
\cos \frac{\pi \al}{2}|^{-1}\lambda_1$, where $\lambda_1$ is the spectral measure of $\wt\mu$. 
Then $\mu \in \mathfrak D(\Xi_\alpha)$ and, using \eqref{tn}--\eqref{tg}, 
it is easy to check  that $\Xi_\alpha(\mu)=\wt\mu$.

Case $1<\al<2$: \, 
If $\wt\mu=\wt\mu_{(0, \wt \nu, \wt\gm^1)_1} \in S_{\al}^0(\rd)$ then $\wt\gm^1=0$.
Take $\mu=\mu_{(0,\nu,0)_1}$ with $\nu=\alpha^{-1}|\Gamma(-\al) \cos \frac{\pi \al}{2}|^{-1}
\lambda_1$, where $\lambda_1$ is the spectral measure of $\wt\mu$. 
Then $\mu \in \mathfrak D(\Xi_\alpha)$  and, similarly as above, 
we verify that $\Xi_\alpha(\mu)=\wt\mu$. 
\medskip

(ii) ($\al = 1$). This case is more delicate and its proof is more involved. 
Let $\mu_1 \in S^0_{1}(\rd)$ be given by \eqref{S1} with $\int_S \xi ~\lambda_1(d\xi)=0$ 
and $\tau \in \mathrm{span \ supp}(\lambda_1)$.  Put $\lambda= \frac{2}{\pi} \lambda_1$.

By Lemma \ref{shift}  there is an $f \in L^{\infty}(S, \lambda)$ such that 
$\tau = \int_{S} \xi f(\xi) ~\lambda(d\xi)$. Put $g(\xi)= e^{-f(\xi)}$. 
Then $\varepsilon_0 < g(\xi) < T_0$ $\lambda$-a.e.\ $\xi\in S$ for some $0<\varepsilon_0<1<T_0 $ and
$$
\tau=  -\int_{S} \xi \log g(\xi) ~\lambda(d\xi).
$$
Define a measure $\nu$ on $\rd$ by
$$
\nu(B) =\int_S \lambda(d\xi) \int_0^{\infty}\1_B(r\xi) \, \frac{1}{g(\xi)} \, 
\delta_{g(\xi)}(dr)= \int_S \1_B(g(\xi)\xi) \, \frac{1}{g(\xi)} \,\lambda(d\xi). 
$$
Notice that $\nu$ is a finite measure concentrated on the annulus $\{\varepsilon_0<  |x| < T_0\}$
Therefore,  it clearly satisfies the first, third and forth conditions on $\nu$ of 
Theorem \ref{D}(ii).  
The second condition is also immediate as
$$
\int_{\rd} x ~\nu(dx) = \int_S \xi \lambda(d\xi) =0.
$$  
Thus $\mu=\mu_{(0, \nu, 0)_0} = \mu_{(0, \nu, \gamma)} \in \mathfrak D (\Xi_{1})$, 
where $\gamma=\int_{|x|\le 1} x ~ \nu(dx)$. 
Consider $\Xi_1(\mu) = \wt\mu_{(0, \wt\nu, \wt\gamma)}$.
We have
\begin{align*}
\wt\nu(B) &=\int_0^{\infty} \int_{\rd} \1_B(t^{-1}x) ~ \nu(dx) dt \\
&= \int_0^{\infty} \int_{S} \1_B(t^{-1}g(\xi)\xi) \frac{1}{g(\xi)} ~ \lambda(d\xi) dt  \\
&= \int_{S} \lambda(d\xi) \int_0^{\infty} \1_B(r\xi) r^{-2} ~ dr
\end{align*}
and 
\begin{align*}
\wt\gamma &= \lim_{\varepsilon\downarrow 0 , T \uparrow \infty} \int_\varepsilon^Tt^{-1}dt
\left(\gamma+\int_\rd x\left(\1_{\{|t^{-1}x|\leq 1\}}-\1_{\{|x|\leq 1\}}\right)\nu(dx)\right)\\
&= \lim_{\varepsilon\downarrow 0 , T \uparrow \infty} \int_\varepsilon^Tt^{-1}dt
\int_\rd x\1_{\{|t^{-1}x|\leq 1\}}  ~\nu(dx) \\
&= \lim_{\varepsilon\downarrow 0 , T \uparrow \infty} \int_\rd x   ~\nu(dx)  \int_{\varepsilon 
\vee |x|}^T t^{-1} ~dt = - \int_{S} \xi \log g(\xi) ~\lambda(d\xi) = \tau.
\end{align*}
Note that the shift parameter in \eqref{S1} of $\Xi_1(\mu)$ is $\wt\gamma+c\int_S\xi\lambda(d\xi)$, where $c$ is a constant; see the proof of Theorem 14.10 in \citet{Sato's_book1999}.
Since $\int_S\xi\lambda(d\xi)=0$, the shift parameter of $\Xi_1(\mu)$ is $\wt\gamma$.
Thus $\Xi_1(\mu)  \in S_{1}(\rd)$ has the spectral measure $\frac{\pi}{2} \lambda=\lambda_1$ and a shift $\tau$. This proves that $\Xi_1(\mu)=\mu_1$.

 Conversely, let $\wt\mu \in \Xi_{1}(\mathfrak D (\Xi_{1}))$. 
Then for some $\mu= \mu_{(0, \nu, \gamma)} \in \mathfrak D (\Xi_{1})$, \  
$\wt\mu=\Xi_{\al}(\mu) = \wt\mu_{(0, \wt\nu, \wt\gamma)}$. 
$\wt\mu \in S_{1}^0(\rd)$ has the spectral measure $\lambda_1=\frac{\pi}{2} \lambda$,
where 
$$
\lambda(B) = \int_{\rd} \1_B\left(\frac{x}{|x|}\right) |x| ~ \nu(dx),
$$
and a shift
\begin{align}
\tau &:= \lim_{\varepsilon\downarrow 0 , T \uparrow \infty} \int_\varepsilon^Tt^{-1}dt
\left(\gamma+\int_\rd x\left(\1_{\{|t^{-1}x|\leq 1\}}-\1_{\{|x|\leq 1\}}\right)
\nu(dx)\right) \nonumber\\
&= \lim_{\varepsilon\downarrow 0 , T \uparrow \infty} \int_\varepsilon^Tt^{-1}dt
\int_\rd x\1_{\{|t^{-1}x|\leq 1\}}  ~\nu(dx).  \label{tau1}
\end{align}
Consider a polar decomposition \eqref{polar} of a finite measure $\rho$, given by 
$\rho(dx)=|x| \nu(dx)$, into the spherical component $\lambda$, given above, 
and the radial component $\rho_{\xi}$. $\rho_{\xi}$ are probability measures. 
We have for $0< \varepsilon < 1 < T$, 
\begin{align*}
\tau^T_{\varepsilon} & :=   \int_\varepsilon^Tt^{-1} dt \int_\rd x\1_{\{|t^{-1}x|\leq 1\}}  ~\nu(dx) 
=\left(\int_\varepsilon^1 + \int_1^T \right) t^{-1} dt\int_\rd x\1_{\{|t^{-1}x|\leq 1\}}  ~\nu(dx) \\
&= - \int_{|x| \le 1} x \log(\varepsilon\vee |x|) ~\nu(dx) -  \int_{|x| > 1} x 
\log(T \wedge |x|) ~\nu(dx) \\ 
&= - \int_S \xi ~ \lambda(d\xi) \int_{0}^{\infty} \left( \log(\varepsilon\vee r) \1_{\{r\le 1\}}
+ \log(T\wedge r) \1_{\{r>1\}} \right) ~ \rho_{\xi}(dr)  
\end{align*}
Since $\xi \mapsto \int_{0}^{\infty} \left( \log(\varepsilon\vee r) \1_{\{r\le 1\}} 
+ \log(T\wedge r) \1_{\{r>1\}} \right) ~ \rho_{\xi}(dr) $ 
is a bounded function ($\rho_{\xi}(0,\infty)=1$), 
$\tau^T_{\varepsilon} \in \mathrm{span \ supp}(\lambda)=\mathrm{span \ supp}(\lambda_1)$ 
by Lemma \ref{shift}.  Thus $\tau= \lim_{\varepsilon\downarrow 0 , T 
\uparrow \infty}\tau^T_{\varepsilon} \in \mathrm{span \ supp}(\lambda_1)$.

Finally, if $\Xi_1(\mu)=\delta_{\tau}$ then $\lambda_1=0$, so that $\nu=0$. Hence $\tau=0$ by 
\eqref{tau1}.  The proof of Theorem \ref{main} is complete.
\end{proof}

\bigskip

\begin{proof}[Proof of Theorem \ref{series}]
Let $\Gamma_j=\theta \tau_j$. 
Then $\{\Gamma_j\}$ is a sequence of arrival times in a Poisson process of 
rate one and we can write \eqref{ser1} as
\begin{equation}\label{ser2}
\sek t^{-1/\al}d\xtm = \theta^{1/\al}\sum_{j=1}^{\infty}\Gamma_j^{-1/\al}V_j.
\end{equation}
Applying Theorem 4.1 of \citet{Rosinski2001} we get that the series converges a.s. if and only if 
$E |V|^{\al}= \theta^{-1} \int_{\rd} |x|^{\al} ~ \nu(dx) < \infty$ (where $V=V_1$) and the limit
\begin{equation}\label{cent0}
a := \lim_{T \to \infty} \int_0^T E\left[ t^{-1/\al} V \1_{\{|t^{-1/\al}V|\le 1\} }\right] ~ dt 
\quad \text{exists in } \rd.
\end{equation}
If $\al \ne 1$ then 
\begin{align}\label{cen}
a = \frac{\al}{\al - 1} \lim_{T \to \infty}  E V\1_{\{|V|\le T^{1/\al}\}}(T^{1-1/\al} - |V|^{\al-1}).
\end{align}

Let $0<\al <1$. 
Notice that for every $\varepsilon \in (0,1)$
\begin{align*}
\limsup_{T\to \infty}\,  & T^{1-1/\al}  |E V\1_{\{|V|\le T^{1/\al}\}}|  \le \limsup_{T\to \infty}
\, T^{1-1/\al} E |V|\1_{\{|V|\le \varepsilon T^{1/\al}\}} \\
& + \limsup_{T\to \infty} \, T^{1-1/\al} E |V|\1_{\{ \varepsilon T^{1/\al} < |V|\le T^{1/\al}\}} \\
& \le \limsup_{T\to \infty} \, T^{1-1/\al} (\varepsilon T^{1/\al})^{1-\al} E |V|^{\al} + 
\limsup_{T\to \infty}  E |V|^{\al} 1_{\{ \varepsilon T^{1/\al} < |V|\le T^{1/\al}\}}  \\
& = \varepsilon^{1-\al}E|V|^\alpha.
\end{align*}
Letting $\varepsilon \to 0$ we show that $\limsup_{T\to \infty}\,   
T^{1-1/\al}  E V\1_{\{|V|\le T^{1/\al}\}}=0$.  Therefore, 
$$
a= \frac{\al}{1-\al} \, E \left[ \frac{V}{|V|} |V|^{\al}\right]. 
$$
We conclude that the series in \eqref{ser2} converges a.s. if and only if 
$E |V|^{\al}< \infty$, that is, $\mu \in \mathfrak D (\Xi_{\al})$ when $0<\al <1$. 

If $1<\al <2$ and $\mu \in \mathfrak D (\Xi_{\al})$, 
then $E|V|^{\al}<\infty$. Since also  $E V=0$ we get
\begin{align*}
a &= \frac{\al}{\al - 1} \lim_{T \to \infty}  E V\1_{\{|V|\le T^{1/\al}\}}(T^{1-1/\al} - |V|^{\al-1}) \\
&=\frac{\al}{\al - 1} \lim_{T \to \infty}  E V (T^{1-1/\al} - |V|^{\al-1} \wedge T^{1-1/\al}) =
\frac{\al}{1-\al} \, E \left[ \frac{V}{|V|} |V|^{\al}\right]. 
\end{align*}
Thus, the series  in \eqref{ser2} converges a.s.  
Conversely, if the series  in \eqref{ser2} converges a.s. 
then $E|V|^{\al} < \infty$ and the limit in \eqref{cen} exists. Hence
\begin{align*}
0 &= \lim_{T \to \infty} \left[ E V\1_{\{|V|\le (2T)^{1/\al}\}} ((2T)^{1-1/\al} - |V|^{\al-1}) -
E V\1_{\{|V|\le T^{1/\al}\}} (T^{1-1/\al} - |V|^{\al-1} )  \right] \\
&= ( 2^{1-1/\al} - 1) \lim_{T \to \infty}  T^{1-1/\al} EV\1_{\{|V|\le T^{1/\al}\}} ,
\end{align*}
which implies that $E V=0$.  Thus  $\mu \in \mathfrak D (\Xi_{\al})$.  
We conclude that, when $\al \ne 1$, the series in \eqref{ser2} converges a.s. 
if and only if  $\mu \in \mathfrak D (\Xi_{\al})$. 

Now consider  $\al=1$.  Suppose that the series in \eqref{ser2} converges a.s. 
Then $\int_{\rd} |x| \, \nu(dx)= \theta E |V| < \infty$
and the limit \eqref{cent0} 
\begin{align}\label{cent1}
a := \lim_{T \to \infty} \int_0^T E\left[ t^{-1} V \1_{\{|t^{-1}V|\le 1\} }\right] ~ dt 
= \lim_{T \to \infty}
E V  \1_{\{|V|\le T\}} \log \frac{T}{|V|} 
\end{align}
exists.  
Therefore,
\begin{align*}
0 &= \lim_{T \to \infty} \left[ E V   \1_{\{|V|\le 2T\}} \log \frac{2T}{|V|} - E V \1_{\{|V|\le T\}} 
\log \frac{T}{|V|}\right] \\
&=  \lim_{T \to \infty} \left[ \log 2\, E V  \1_{\{|V|\le 2T\}} +E V \1_{\{T<|V|\le 2T\}} \log 
\frac{T}{|V|}\right] = \log 2 \, E V.
\end{align*}
This shows that  $\int_{\rd} x \, \nu(dx)= \theta E V =0$. 
The condition $\int_{|x| \le 1} |x| \, |\log |x| | ~ \nu(x) < \infty$  
obviously holds because $\nu$ is a finite measure. 
Then for $T>1$ we have
\begin{align*}
&  \int_{|x|> 1} x \log(|x| \wedge T) ~\nu(dx)  = \int_{1<|x|\le T} x \log |x| ~\nu(dx)  +
\log T \int_{|x|> T} x ~\nu(dx) \\
&= \int_{1<|x|\le T} x \log |x| ~\nu(dx)  -  \log T \int_{|x| \le T} x ~\nu(dx) \\
& = -\int_{|x|\le T} x \log \frac{T}{|x|} ~\nu(dx) - \int_{|x|\le 1} x \log |x| ~ \nu(dx) \\
& = - \theta E V  \1_{\{|V|\le T\}} \log \frac{T}{|V|} - \int_{|x|\le 1} x \log |x| ~ \nu(dx).
\end{align*}
Since the limit in \eqref{cent1} exists, 
$\lim_{T \to \infty} \int_{|x|> 1} x \log(|x| \wedge T) ~\nu(dx)$
exists as well.  
Thus $\mu \in  \mathfrak D (\Xi_{1})$. 
Conversely, if $\mu \in  \mathfrak D (\Xi_{1})$, then $E V=0$ and the above computation 
shows that the limit in \eqref{cent1} exists.  
This completes the case $\al=1$.

We have proved that one side in \eqref{ser1} exists if and only if the other one does. 
Now we will show that they are equal. Let $N_t= \max\{j: \tau_j\le t\}$. For any fixed $T>0$ we have
$$
\int_0^T t^{-1/\al} ~dX_t^{(\mu)} = \sum_{j=1}^{N_T} \tau_j^{-1/\al} V_j.
$$
The integral is definable if and only if  the series converges a.s., so  
passing $T\to \infty$ yields the almost sure equality in \eqref{ser1}. 

Relation \eqref{fin0}  is immediate from  the proof of Theorem \ref{main}(i), 
where we constructed for any $\mu_1 \in S^0_{\al}(\rd)$, $0<\al < 1$, 
a distribution $\mu=\mu_{(0,\nu, 0)_0}\in \mathrm{CP}_0(\rd)$  such that  $\Xi_{\al}(\mu) = \mu_1$.  
Similarly, the proof Theorem \ref{main}(ii) gives for any $\mu_1 \in S^0_{1}(\rd)$ 
having $\tau \in \mathrm{span \ supp}(\lambda_1)$, a distribution $\mu=\mu_{(0,\nu, 0)_0}\in 
\mathrm{CP}_0(\rd)$  such that  $\Xi_{1}(\mu) = \mu_1$.  
This shows \eqref{fin0}. 

Now we will prove \eqref{fin2},  $1<\al<2$.  Let $\wt\mu = \Xi_{\al}(\mu)$, where  
$\mu \in \mathfrak D (\Xi_{\al}) \cap \mathrm{CP}_0(\rd)$. 
Hence $\int_{\rd} |x|^{\al} \, \nu(dx)<\infty$
and $\mu=\mu_{(0,\nu,0)_{1}}=\mu_{(0,\nu,0)_{0}}$, so that $\int_{\rd} x ~\nu(dx)=0$ by 
Corollary \ref{0=1}. Moreover, $\nu(\rd)<\infty$. 
Consider a polar decomposition \eqref{polar} of $\rho(dx):=\al |x|^{\al}~\nu(dx)$, 
where the radial components $\rho_{\xi}$ are probability measures and the spherical 
component  $\lambda$ is given by \eqref{la}. Define
$$
q(\xi) = \int_0^{\infty} r^{1-\al} ~\rho_{\xi}(dr).
$$
Then
\begin{align*}
\int_S q(\xi) \xi ~ \lambda(d\xi) &= \int_{S} \int_0^{\infty} \xi r^{1-\al}  \rho_{\xi}(dr) 
\lambda(d\xi) \\ 
&= \int_{\rd} x|x|^{-\al}\rho(dx) = \al \int_{\rd} x\nu(dx)=0,
\end{align*}
and by Jensen's inequality,
\begin{align*}
\int_S q(\xi)^{\frac{\al}{\al-1}} ~ \lambda(d\xi) &= \int_{S} \left( \int_0^{\infty}  
r^{1-\al} ~\rho_{\xi}(dr) \right)^{\frac{\al}{\al-1}}  \lambda(d\xi) \\ 
&\le \int_{S}  \int_0^{\infty}  r^{-\al} ~\rho_{\xi}(dr)  \lambda(d\xi) 
= \al \int_{\rd}  ~\nu(dx)< \infty.
\end{align*}
As noted in the comment following \eqref{la}, the spectral measure $\lambda_1$ of $\wt\mu$ is 
proportional to $\lambda$. Therefore, $q$ satisfies the conditions of \eqref{fin2}.

To prove the converse inclusion, let  $\mu_1$ be a strictly $\al$-stable distribution  with the 
spectral measure $\lambda_1$ such that for some function $q$ the conditions of \eqref{fin2} hold. 
Define a measure $\nu$ by
$$
\nu(B)= c\int_S \lambda_1(d\xi) \int_0^{\infty} \1_{B}(r\xi) ~ q(\xi)^{\frac{\al}{\al-1}} ~ 
\delta_{q(\xi)^{\frac{1}{1-\al}}}(dr),
$$ 
where $c$ is a constant to be determined later. 
Then $\nu(\rd) =  c\int_S q(\xi)^{\frac{\al}{\al-1}}  \lambda_1(d\xi) < \infty$ and 
$$
\int_{\rd} |x|^{\al} ~\nu(dx) = c\int_S \lambda_1(d\xi) < \infty.
$$
Moreover,
$$
\int_{\rd} x ~\nu(dx) = c\int_S q(\xi) \xi ~\lambda_1(d\xi) =0.
$$
Thus $\mu=\mu_{(0,\nu,0)_1} \in \mathfrak D (\Xi_{\al}) \cap \mathrm{CP}_0(\rd)$. Finally, for any $B 
\in  \mathcal{B}(S)$,
\begin{align*}
\int_{\rd} \1_B\left( \frac{x}{|x|}\right) |x|^{\al} \nu(dx) = c\int_S \1_{B}(\xi) ~\lambda_1(d\xi) = 
c \lambda_1(B).
\end{align*}
Hence, taking $c= |\al\Gamma(-\al) \cos \frac{\pi \al}{2}|^{-1}$, we get  $\mu_1 = \Xi_{\al}(\mu)$. 
 
The strict inclusion in \eqref{fin2} is obvious. 
Indeed, let  $\mu \in S^0_{\al}(\rd)$ have the spectral measure $\lambda_1=\delta_v$, 
where $v \in S$.  
Since $\int_S q(\xi) \xi ~\lambda_1(d\xi)= q(v)v\ne 0$ for any positive function $q$, 
$\mu \notin \Xi_{\al}(\mathfrak D (\Xi_{\al}) \cap \mathrm{CP}_0(\rd))$. 
The proof of Theorem \ref{series} is complete.
\end{proof}

\bigskip

\begin{proof}[Proof of Theorem \ref{Poisson}]
(i) The inclusion $\Xi_{\al}(\mathrm{CP}_0(S))\subset S_{\al}^0(\rd)$ is obvious by Theorem \ref{main}.
Let $\wt\mu\in S_{\al}^0(\rd)$.
In the proof of Theorem \ref{main} we constructed a distribution 
$\mu=\mu_{(0,\nu, 0)_0}\in \mathrm{CP}_0(S)$  such that  $\Xi_{\al}(\mu) = \wt\mu$.
Thus $\Xi_{\al}(\mathrm{CP}_0(S))\supset S_{\al}^0(\rd)$.

Let $\mu_1={\mu_1}_{(0,\nu_1, 0)_0},\mu_2={\mu_2}_{(0,\nu_2, 0)_0}\in\mathrm{CP}_0(S)$ 
and $\Xi_{\al}(\mu_1)=\Xi_{\al}(\mu_2)$.
Then by \eqref{tn} and \eqref{la}, the spectral measure of $\Xi_{\al}(\mu_1)$
is $\alpha|\Gamma(-\al) \cos \frac{\pi \al}{2}|\nu_1$.
By the uniqueness of spectral measure, we have $\alpha|\Gamma(-\al) 
\cos \frac{\pi \al}{2}|\nu_1=\alpha|\Gamma(-\al) \cos \frac{\pi \al}{2}|\nu_2$.
Thus $\mu_1=\mu_2$.

In the case $d=1$, $S=\{-1,1\}$ and hence
$$\Xi_{\al}(\mathrm{CP}_0(S))=\left\{\law\left
(\int_0^\infty t^{-1/\alpha}d(N_1(at)-N_2(bt))\right)\colon a,b\geq 0\right\}.$$

(ii) Let $\wt\mu=\wt\mu_{(0,\wt\nu,\wt\gamma)}\in \Xi_1(\mathrm{CP}_0(S)\cap\mathrm{CP}_1(S))$.
Then $\wt\mu=\Xi_1(\mu)\in S_1^0(\rd)$ for some 
$\mu=\mu_{(0,\nu,0)_0}\in\mathrm{CP}_0(S)\cap\mathrm{CP}_1(S)$.
Then $\int_S \xi\nu(d\xi)=0$. Therefore
$$
\wt\gamma=\lim_{\varepsilon\downarrow 0,T\uparrow \infty}\int_\varepsilon^Tt^{-1}dt
\int_\rd x\1_{\{|t^{-1}x|\leq 1\}}\nu(dx)
=\lim_{\varepsilon\downarrow 0,T\uparrow \infty}\int_\varepsilon^Tt^{-1}\1_{\{t\geq  1\}}dt
\int_S \xi\nu(d\xi)=0.
$$
Hence the shift parameter $\tau$ in \eqref{S1} of $\wt\mu$ is $0$.

Conversely, let $\wt\mu\in S_1^0(\rd)$ have the shift parameter $0$.
Take $\mu=\mu_{(0,\nu,0)_0}$ with $\nu=\frac 2\pi\lambda_1$, where $\lambda_1$ 
is the spectral measure of $\wt\mu$. 
Then $\mu \in \mathrm{CP}_0(S)\cap\mathrm{CP}_1(S)$ and $\Xi_1(\mu)=\wt\mu$.

The rest of the proof of this case is similar to the case (i).

(iii) This case is similar to the case (i).
\end{proof}
\vskip 5mm
\section{Final remarks}

The following proposition and remark are concerned with the limits of 
ranges of iterations of the mappings $\Xi_\alpha$, $0<\alpha<2$.  
The composition $\Xi_\alpha^2= \Xi_\alpha \circ \Xi_\alpha$\,  is defined on the domain 
$\mathfrak D(\Xi_\alpha^2)=\{\mu\in\mathfrak D(\Xi_\alpha)\colon 
\Xi_\alpha(\mu)\in\mathfrak D(\Xi_\alpha)\}$.

\begin{prop}
Let $0<\alpha<2$.
Then $\mathfrak D(\Xi_\alpha^2)=\{\delta_0\}$, so that
$$
\Xi_\alpha^m(\mathfrak D(\Xi_\alpha^m))=\{\delta_0\} \quad \text{for every } m\ge 2.
$$
\end{prop}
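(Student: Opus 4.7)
The plan is to prove that if $\mu \in \mathfrak D(\Xi_\al^2)$ then $\mu = \dl_0$; the converse and the iterated statement for $m\ge 2$ then follow at once.

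First, assume $\mu \in \mathfrak D(\Xi_\al^2)$ and set $\mu_1 := \Xi_\al(\mu)$. By definition, $\mu_1 \in \mathfrak D(\Xi_\al)$, and by Theorem \ref{main}, $\mu_1 \in S_\al^0(\rd)$. Let $\wt\nu$ and $\ld_1$ be the L\'evy measure and spectral measure of $\mu_1$. From the polar decomposition of an $\al$-stable L\'evy measure one reads
$$
\int_\rd |x|^\al \, \wt\nu(dx) \;=\; \ld_1(S) \int_0^\infty r^{-1}\,dr,
$$
which is infinite unless $\ld_1 = 0$; the same computation with $|x|$ in place of $|x|^\al$ handles $\al=1$. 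Since $\mu_1 \in \mathfrak D(\Xi_\al)$, Theorem \ref{D} forces $\ld_1 = 0$, hence $\wt\nu \equiv 0$ and $\mu_1 = \dl_c$ for some $c\in\rd$.

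Next, I would show $c = 0$. For $\al \ne 1$, the scaling identity $\wh{\mu_1}(bz) = \wh{\mu_1}(z)^{b^\al}$ applied to $\mu_1 = \dl_c$ gives $b\la c,z\ra = b^\al\la c,z\ra$ for all $b>0$, forcing $c = 0$. For $\al=1$ this identity is automatic, but Theorem \ref{main}(ii) requires the shift parameter of $\mu_1$ to lie in $\mrm{span \ supp}(\ld_1) = \{0\}$, so $c=0$ again. Thus $\mu_1 = \dl_0$.

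Finally, from $\Xi_\al(\mu) = \dl_0$ and the representation \eqref{tn} of the image L\'evy measure $\wt\nu$ as the pushforward of $\nu$ under $(t,x)\mapsto t^{-1/\al}x$, we deduce $\nu \equiv 0$. Combined with $A=0$ and the drift/mean condition built into $\mathfrak D(\Xi_\al)$ via Theorem \ref{D} (drift zero for $\al<1$, mean zero for $1<\al<2$, and both zero for $\al=1$ by Corollary \ref{0=1}), this yields $\mu = \dl_0$. The inductive extension $\Xi_\al^m(\mathfrak D(\Xi_\al^m)) = \{\dl_0\}$ for $m\ge 2$ is immediate.

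The main obstacle is the case $\al=1$ in the second step: a degenerate distribution $\dl_c$ is strictly $1$-stable for every $c\in\rd$, so reducing $\mu_1$ to a point mass does not by itself locate that point at the origin. What rescues the argument is the refined characterization of $\Xi_1(\mathfrak D(\Xi_1))$ in Theorem \ref{main}(ii), which pins the shift to $\mrm{span \ supp}(\ld_1)$; once $\ld_1$ vanishes, this span collapses to $\{0\}$.
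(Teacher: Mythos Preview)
Your proof is correct and rests on the same key observation as the paper's: membership of $\Xi_\al(\mu)$ in $\mathfrak D(\Xi_\al)$ forces $\int_{\rd}|x|^\al\,\wt\nu(dx)<\infty$, which is impossible for a nontrivial $\al$-stable L\'evy measure. The paper's argument is more direct, however. From \eqref{tn} one computes $\int_{\rd}|x|^\al\,\wt\nu(dx)=\int_0^\infty t^{-1}\,dt\cdot\int_{\rd}|x|^\al\,\nu(dx)$, so finiteness immediately gives $\nu=0$ (not merely $\ld_1=0$); hence $\mu$ itself is a point mass lying in $\mathfrak D(\Xi_\al)$, and Theorem~\ref{D} forces $\mu=\dl_0$. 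Your detour through $\mu_1=\dl_c$, the case split on $\al$ to pin $c=0$, and the backward deduction of $\nu=0$ is valid but unnecessary: the $\al=1$ subtlety you rightly flag for $\mu_1$ simply never arises when one argues at the level of $\mu$ rather than $\mu_1$.
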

\begin{proof}
Let   $\mu \in \mathfrak D(\Xi_\alpha^2)$ have the L\'evy measure $\nu$.  
Since $\Xi_\alpha(\mu)\in\mathfrak D(\Xi_\alpha)$, its L\'evy measure $\wt\nu$ has  
finite $\al$-th moment by Theorem~\ref{D}. 
From \eqref{tn} this is only possible when $\nu=0$. 
Hence $\mu$ is a point mass distribution belonging to $\mathfrak D(\Xi_\alpha)$. 
By Theorem~\ref{D} (or simply by \eqref{int}), 
$\mu=\delta_0$.
\end{proof}

\medskip

\begin{rem}\label{iteration}
Given a stochastic integral mapping
$\Phi_f$ in \eqref{si},
an interesting problem is to determine the limit $\bigcap_{m=1}^\infty \Phi_f^m(\mathfrak D(\Phi_f^m))$.
Recently, for many integrands $f$, the descriptions of $\bigcap_{m=1}^\infty 
\Phi_f^m(\mathfrak D(\Phi_f^m))$ have been obtained.
In many cases, $\bigcap_{m=1}^\infty \Phi_f^m(\mathfrak D(\Phi_f^m))$ is the class 
$L_\infty(\rd)$ of completely selfdecomposable distributions or its subclasses;
see \citet
{MaejimaSato2009,MaejimaUeda2009e,MaejimaUeda2009c,AoyamaLindnerMaejima2009,Sato2011,Sato2012}.
Also, in some cases, $\bigcap_{m=1}^\infty \Phi_f^m(\mathfrak D(\Phi_f^m))$ is the class 
$L_\infty(b,\rd)$ of completely semi-selfdecomposable distributions with span $b>1$,
which is the closure of the class of semi-stable distributions with span $b$ under convolution and 
weak convergence;
see \citet{MaejimaUeda2009}.
Other examples are found in Section 4 of \citet{Sato2011}.
However, except of $\Xi_\alpha$, we do not know any example of a stochastic integral 
mapping $\Phi_f$ satisfying
$\Phi_f(\mathfrak D(\Phi_f))\neq\{\delta_0\}$ and $\bigcap_{m=1}^\infty 
\Phi_f^m(\mathfrak D(\Phi_f^m))=\{\delta_0\}$.
\end{rem}

\vskip 5mm

\end{document}